\documentclass[11pt]{article}
\usepackage{amsmath}
\usepackage{amsfonts}
\usepackage{amssymb}
\usepackage{amsthm}
\usepackage{graphicx}
\usepackage{mathrsfs,eucal,xspace}
\usepackage[all]{xy}
\usepackage{pstricks,pst-node}
\usepackage{fullpage}



\textwidth = 6.5 in
\textheight = 8.5 in
\oddsidemargin = 0.0 in
\evensidemargin = 0.0 in
\topmargin = 0.0 in
\headheight = 0.0 in
\headsep = 0.0 in
\parskip = 12pt
\parindent = 0.0in

\newtheorem{theorem}{Theorem}

\newtheorem{definition}[theorem]{Definition}
\newtheorem{lemma}[theorem]{Lemma}

\theoremstyle{remark}
\newtheorem*{example}{Example}

\newcommand{\spam}{\mathop{\mathrm{span}}}

\newcommand{\supp}[1]{{\text{supp}}({#1})}
\newcommand{\ints}{\mathbb{Z}}
\newcommand{\reals}{\mathbb{R}}

\newcommand{\dif}{\mathrm{d}}

\newcommand{\N}{\mathcal{N}}

\title{{On Local RBF Approximation}
\thanks{ \emph{2000 Mathematics
   Subject Classification:} 41A17, 26B35, 41A63, 42C15 }
\thanks{\emph{Key words:}
    error estimates, nonlinear approximation, optimal approximation, 
   radial basis functions, scattered data, thin-plate splines, surface
splines, approximation order}}
\author{T.Hangelbroek\thanks{Texas A\&M
    University College Station, TX 77843, USA. Research supported
    by NSF Postdoctoral Research Fellowship.} }
\begin{document}
\maketitle
\begin{abstract}
The purpose of this paper is to investigate RBF approximation with highly nonuniform
centers. Recently, DeVore and Ron have developed a notion of the local density of a 
set of centers -- a notion that permits precise pointwise error estimates 
for surface spline approximation.  We give an equivalent, alternative characterization
of local density, one that allows effective placement of centers at different resolutions. 
We compare, also, the pointwise results of DeVore--Ron to previously works of
 Wu and Schaback  and of  Duchon.
\end{abstract}
\section{Introduction}
This brief article concerns local approximation results for radial basis function (RBF) approximation with the goal of effectively placing centers at varying resolutions. 
We consider RBF approximants of the form
$x\mapsto \sum_{\xi\in\Xi} \phi(x-\xi)$, 
where the arrangement of centers, $\Xi$, may be highly nonuniform.
A motivation for this set up is that centers may be placed strategically
to treat defects in the target function. 
This becomes very important in high dimensions, where conventional quasi-uniform placement of centers is extremely costly; 
error estimates assuming a (small) fill distance $h$ require a placement of $\mathcal{O}(h^{-d})$ centers; obtaining a comparable error with fewer centers
is clearly desirable.
To this end, we seek a method by which $\Xi$ can be chosen 
to achieve a pointwise error that reflects the local arrangement of $\Xi$.
In turn, this requires finding a useful measure of 
the {\em local  density} of $\Xi$. 

In \cite{DeRo}, DeVore and Ron 
establish powerful local error estimates for kernel based approximation;
along the way they develop a satisfactory notion of local density -- the {\em majorant}
-- expressed as a function over a domain containing the centers 
(see (\ref{majorant}) below). 
This function gives, roughly, the distance to the nearest unisolvent subset of $\Xi$.
However, it also satisfies an extra condition of global compatibility: 
it may not grow or shrink too rapidly. This condition is not stated explicitly, rather 
it is contained in the definition, but it plays an essential role in their local error estimates.
In this article we investigate this compatibility condition and  give an equivalent condition
that lends itself to effective placement of highly nonuniform centers. 

Error estimates in \cite{DeRo} show that kernel approximation bears a strong similarity to univariate spline approximation, which exhibits local convergence in the sense that error decays rapidly over intervals where breakpoints are tightly spaced. Indeed, 
such results have long been known for spline quasi-interpolation, at least since \cite{BF}. 
If $Q_T$ is the quasi-interpolation operator of 
order $r$ associated with knots $T= (t_j)$, then \cite[(4.18)]{DeLo} in conjunction
with a theorem of Whitney \cite[Theorem 4.2]{DeLo} tells us that for
 $x\in [t_j,t_{j+1}]$,
$$ \left|f(x) - Q_{T}(f) (x)\right|\le C_r 
\omega_{r+1}(f,t_{j+r}-t_{j-r+1}).$$
We note that the error from spline approximation at $x$ depends only on the distribution of knots near $x$ (the $r^{\mathrm{th}}$ {\em nearest neighbors}) and the smoothness of the target function in a neighborhood of $x$. This result can be attributed to the local nature of the  B-splines basis (and its associated dual functionals).

Because kernels are often globally supported, 
and because a truly local basis similar to the B-splines seems to be out of the question, the distance to the nearest neighbors may be unsuitable for measuring local density.
Kernel approximants exhibit {\em far field} effects, meaning that each kernel has a global influence, and the majorant of DeVore \!--\! Ron, via the global compatibility condition, penalizes remote, sparse density in an effort to mitigate such far field effects.

Remarkably, it is sometimes the case that RBF interpolation is local in a stronger sense than considered by DeVore \!--\! Ron. Specifically, rates of convergence for interpolation of certain target functions (those coming from the {\em native space}) with certain RBFs can be measured in terms of an expression that takes into account only the local distribution of centers.
However, the drawback is that such results are presently only known to hold for 
target functions from a specific class.

In the following section, we present an alternative, equivalent characterization of the
majorant of Devore and Ron, one that involves a global compatibility condition, and we give a self-contained development of their pointwise error estimates that explicitly uses this compatibility condition. In Section 3, we demonstrate how the compatibility
condition may be used to place centers nonuniformly. Section 4 is a discussion of (previously known) local error estimates in the native space context.

\section{Local Estimates}
In \cite{DeRo}, DeVore and Ron construct a local approximation scheme using a simple 
measure of the local density of centers. 
This initial notion of density (given in Definition \ref{LD}) is not suitable to capture far field effects, and  the analysis of the scheme's convergence eventually relies on a more refined notion of local density: the initial density's {\em majorant}.  
In this section, we recast the DeVore \!--\! Ron result with a different but equivalent local density parameter, one that lends itself to efficient placement of centers.
As in \cite{DeRo}, 
we begin by giving the initial definition of local density,
a function on $\reals^d$ which at each $\alpha\in\reals^d$ 
indicates a radius sufficient to capture a $K$-stable local polynomial 
reproduction of order $\ell$: 
\begin{definition}[Local Density] \label{LD}
Given a set of centers $\Xi \subset \reals^d$, a \emph{local density (LD)}  $\rho:\reals^d \to \reals_+$ is a function with an associated {\it local polynomial reproduction} of precision $\ell$. That is to say, there is a  kernel
$a:\Xi\times \reals^d \to \reals:(\xi,\alpha)\mapsto a(\xi,\alpha)$
so that the following hold:
\begin{description}
\item[\bf  (Support)]  For $|\xi - \alpha|>\rho(\alpha)$, $a(\xi,\alpha) = 0$.
\item[\bf (Precision)] For all $p\in \Pi_{\ell}$ we have $\sum_{\xi\in\Xi} a(\xi,\alpha)p(\xi) = p(\alpha)$.
\item[\bf (Stability)] There is $K>0$ such that $\sum_{\xi\in\Xi} |a(\xi,\alpha)|<K$ for all $\alpha$.
\end{description}
\end{definition}
We note that this definition is given, in more or less the same form, by Wu and Schaback in \cite[Lemma 2]{WuSch} (we discuss their local results in Section 4.2).

This construction 
allows the surface spline, $\phi$
\begin{equation}\label{surface}
\phi(x) :=\phi_k(x):= C_{k,d}
\begin{cases}
 |x|^{2k-d}\log |x| &\quad \text{for even}\ d\\
 |x|^{2k-d} &\quad \text{for odd}\ d
\end{cases}
\end{equation}
 (also known as polyharmonic splines 
because they are the fundamental solution of the $k$-fold Laplacian $\Delta^k$), 
to be approximated by a linear combination of nearby shifts. 
This is accomplished with a small, local error:
$$|\phi(x- \alpha) - \sum a(\xi,\alpha) \phi(x-\xi)| \le C\, \rho(\alpha)^{2k -d} \left(1+\frac{|x-\alpha|}{\rho(\alpha)}\right)^{-\nu},\quad \nu := \ell+d - 2k,$$
and one can generate the approximant:
$$T_{\Xi} f(x) := \int_{\reals^d} \Delta^{k}f(\alpha) \sum_{\xi\in \Xi} a(\xi,\alpha) \phi(x-\xi) \, \dif \alpha.$$
This leads to convenient pointwise error estimates:
\begin{equation}\label{firsterror}
|f(x) - T_{\Xi} f(x)| \le C \int_{\reals^d} |\Delta^{k}f(\alpha)| \, \rho(\alpha)^{2k -d} \left(1+\frac{|x-\alpha|}{\rho(\alpha)}\right)^{-\nu} \, \dif \alpha.
\end{equation}

Observe that the construction of the approximation operator $T_{\Xi}$ depends only on a local polynomial reproduction
$(\xi,\alpha)\mapsto a(\xi,\alpha)$. The estimate (\ref{firsterror}) holds for any LD with local polynomial reproduction $a$. In particular, it holds for any function $\rho'$ with $\rho'\ge\rho$.  With this in
mind, it may seem tempting to use an optimally small $\rho$: the LD so that $\rho(\alpha)$ is the minimal radius around $\alpha$ that captures a $K$-stable polynomial reproduction of order $\ell$.
Sadly, because of the global nature of the kernels, this is unsuitable for producing precise estimates.

In  \cite{DeRo} 
the LD is used to create a ``majorant'':
\begin{equation}
\label{majorant}
H(x) = \sup_{y \in \reals^d} \rho(y)\left(1+\frac{|x-y|}{\rho(y)}\right)^{-r}. 
\end{equation}

In short, the coefficient kernel associated to the LD is used to create the approximant, 
but, in order to attack the estimate (\ref{firsterror}), 
 an expression involving the majorant controls the error. 
The initial LD only plays an ancillary role: to construct the approximant via the coefficient kernel and to generate the majorant. 

To be sure, the entire exercise could be repeated using only $H$ and entirely without $\rho$.
It is a simple task to show that $H$ is itself an LD (since $H(x)\ge \rho(x)$ and the local polynomial reproduction $(\xi,x)\mapsto a(\xi,x)$ is a local polynomial reproduction for $H$ as well). Moreover, the majorant of $H$ is simply a constant multiple of $H$ (a constant depending only on $r$).
It follows that by replacing the initial LD $\rho$ with its majorant $H$,
one would obtain the same results. 

Alternately, from the beginning one may impose the assumption that the LD is equivalent to
its majorant (i.e., it is {\em self-majorizing}).
This is a change of perspective: 
from the point of view that centers have been given outside of our control 
(and with the goal of remaining faithful to the local distribution of data by having error estimates reflecting the local density)  
to the setting where the spacing of centers is chosen to reflect regions of interest or to attack defects in the target function.
We will choose centers having an LD that is correct from the start, having an extra condition designed to handle far field effects.
The condition on the LD will be different from that of \cite{DeRo}; 
it is a slightly more easily verified property: {\em slow growth}.
To proceed, we formalize both extra assumptions (slow growth and self-majorization) 
under the heading of global compatibility and take a moment to discuss their equivalence.
\begin{definition}[Global Compatibility]\label{SG}
If there is a constant $C_{sg}>0$ so that for every $x$ and $\alpha$, we have
\begin{equation}\label{sg}
\rho(\alpha) \le C_{sg}\rho(x)\left(1+\frac{|x-\alpha|}{\rho(x)}\right)^{1-\epsilon},
\end{equation}
we say $\rho$ exhibits $1-\epsilon$ {\em slow growth}.

The function $\rho:\Omega\to \reals_{+}$ exhibits {\em self-majorization} of order $r$ if there is a constant $C_{sm}>0$ so that for every $x$ and $y$, we have
\begin{eqnarray}
\label{sm}
\rho(y) \ge C_{sm} \rho(x)\left(1+\frac{|x-y|}{\rho(x)}\right)^{-r}
\end{eqnarray}
\end{definition}
The equivalence of these two assumptions can be expressed formally:
%
%
\begin{lemma}
If $\rho$ satisfies the property of self-majorization (\ref{sm}) 
then it satisfies the property of slow growth (\ref{sg}) with $\epsilon = \frac{1}{r+1}$ and constant $C_{sg}$ depending only on $r$ and $C_{sm}$. 
Likewise, if $\rho$ satisfies (\ref{sg}) then it satisfies (\ref{sm}) with 
$r = \frac{1-\epsilon}{\epsilon}$ 
and constant $C_{sm}$ depending only on $\epsilon$ and $C_{sg}$.
\end{lemma}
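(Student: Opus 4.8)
The plan is to prove the two implications separately, in each case reducing to the single inequality by judicious choices of the free points.

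\textbf{From self-majorization to slow growth.} Suppose (\ref{sm}) holds. Fix $x$ and $\alpha$ and write $d = |x-\alpha|$, $u = \rho(x)$, $v = \rho(\alpha)$. I want to bound $v$ in terms of $u$ and $d/u$. Applying (\ref{sm}) with the roles reversed — i.e. with the base point $\alpha$ and target $x$ — gives $u \ge C_{sm} v (1 + d/v)^{-r}$, hence
$$ v \le C_{sm}^{-1} u \left(1 + \frac{d}{v}\right)^{r}. $$
This is implicit in $v$, so the work is to extract an explicit bound. If $v \le d$ then $1 + d/v \le 2d/v$, so $v \le C_{sm}^{-1} u (2d/v)^r$, i.e. $v^{r+1} \le 2^r C_{sm}^{-1} u\, d^{r}$, giving $v \le (2^r C_{sm}^{-1})^{1/(r+1)} u^{1/(r+1)} d^{r/(r+1)} = (2^r C_{sm}^{-1})^{1/(r+1)} u\, (d/u)^{r/(r+1)}$. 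Since $r/(r+1) = 1 - 1/(r+1)$, and since $d/u \le 1 + d/u$, this is exactly (\ref{sg}) with $\epsilon = 1/(r+1)$. In the complementary case $v \le d$ fails, i.e. $v > d$, then $1 + d/v < 2$ so directly $v \le 2 C_{sm}^{-1} u \le 2 C_{sm}^{-1} u (1 + d/u)^{1-\epsilon}$, which is again (\ref{sg}) (the constant may need adjusting to absorb $2^r C_{sm}^{-1}$ versus $2 C_{sm}^{-1}$; take the larger). So $C_{sg}$ depends only on $r$ and $C_{sm}$, as claimed.

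\textbf{From slow growth to self-majorization.} Now suppose (\ref{sg}) holds with exponent $1-\epsilon$. Fix $x$ and $y$, set $d = |x-y|$, $u = \rho(x)$, $v = \rho(y)$; I want a lower bound on $v$ of the form (\ref{sm}) with $r = (1-\epsilon)/\epsilon$. Apply (\ref{sg}) with $\alpha \leftarrow x$ and $x \leftarrow y$, i.e. use $y$ as the base point: $u \le C_{sg}\, v (1 + d/v)^{1-\epsilon}$, so
$$ v \ge C_{sg}^{-1} u \left(1 + \frac{d}{v}\right)^{-(1-\epsilon)} \ge C_{sg}^{-1} u \left(1 + \frac{d}{v}\right)^{-1}, $$
using $1-\epsilon \le 1$. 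Again this is implicit in $v$, so I need to disentangle it. The expected dichotomy is on the size of $v$ relative to $d$. If $v \ge d$, then $1 + d/v \le 2$, so $v \ge (2C_{sg})^{-1} u$, and since $(1 + d/u)^{-r} \le 1$ this immediately yields (\ref{sm}) with that constant. If instead $v < d$, then from $v \ge C_{sg}^{-1} u (1 + d/v)^{-(1-\epsilon)}$ and $1 + d/v \le 2d/v$ I get $v \ge C_{sg}^{-1} u (2d/v)^{-(1-\epsilon)} = C_{sg}^{-1} 2^{-(1-\epsilon)} u\, v^{1-\epsilon} d^{-(1-\epsilon)}$, hence $v^{\epsilon} \ge C_{sg}^{-1} 2^{-(1-\epsilon)} u\, d^{-(1-\epsilon)}$, i.e. $v \ge (C_{sg}^{-1} 2^{-(1-\epsilon)})^{1/\epsilon} u^{1/\epsilon} d^{-(1-\epsilon)/\epsilon}$. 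Writing this as a multiple of $u (d/u)^{-r}$ with $r = (1-\epsilon)/\epsilon$ works out since $1/\epsilon = 1 + (1-\epsilon)/\epsilon = 1 + r$, and then replacing $d/u$ by $1 + d/u$ only decreases the right side (as $-r < 0$), so (\ref{sm}) follows. Take $C_{sm}$ to be the smaller of the two constants from the two cases.

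\textbf{Main obstacle.} The only real subtlety is handling the fact that both starting inequalities, once the base point is swapped, become \emph{implicit} in the quantity one wants to bound ($\rho(\alpha)$ appears on both sides). The device in each direction is the same: split into the regime where $\rho(\cdot)$ at the target point dominates the separation $d$ (so the bracket is $\Theta(1)$ and the estimate is trivial) and the regime where it does not (so $1 + d/v \asymp d/v$ and the inequality becomes a genuine power inequality in $v$ that can be solved). One should also double-check that replacing $d/u$ by $1 + d/u$ at the end is legitimate — it is, because the exponents have the right sign in each case ($1-\epsilon > 0$ in (\ref{sg}), $-r < 0$ in (\ref{sm})) — and that the bookkeeping $r/(r+1) = 1-\epsilon$, equivalently $\epsilon = 1/(r+1)$, equivalently $r = (1-\epsilon)/\epsilon$, is internally consistent, which it is.
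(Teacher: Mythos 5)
Your argument is correct and matches the paper's proof in all essentials: both apply the hypothesis so that the "implicit" density sits in the bracket, split into the regime where that density exceeds the separation (bracket bounded by $2$) and the regime where it does not (replace $1+t$ by $2t$ and solve the resulting power inequality in $\rho$), and then pass from $d/u$ to $1+d/u$ using the sign of the exponent. The one slip — in the $v>d$ case of the first direction you should get $v\le 2^{r}C_{sm}^{-1}u$ rather than $2C_{sm}^{-1}u$ — you already flagged, and it only changes the final constant.
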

\begin{proof}
When $|x-\alpha|\ge \rho(x)$, (\ref{sg}) implies that 
$\rho(\alpha) \le 2^{1-\epsilon}C_{sg} \rho(x)^{\epsilon}|x-\alpha|^{1-\epsilon}$,
so 
$$\frac{2^{\epsilon-1}}{C_{sg}}\rho(\alpha)^\epsilon\left(\frac{|x-\alpha|}{\rho(\alpha)}\right)^{\epsilon-1}\le  \rho(x)^{\epsilon}.$$
On the other hand, when  
$|x-\alpha|< \rho(x)$, $\rho(\alpha) \le 2^{1-\epsilon}C_{sg} \rho(x)$, so
$\rho$ satisfies self-majorization with $C_{sm} = \min\left(\frac{2^{\epsilon-1}}{C_{sg}},\left(\frac{2^{\epsilon-1}}{C_{sg}}\right)^{1/\epsilon}\right)$.

When $|x-\alpha|\ge \rho(x)$, (\ref{sm}) implies that 
$\rho(\alpha) \ge 2^{-r}C_{sm} \rho(x)^{1+r}|x-\alpha|^{-r}$,
so 
$$\frac{2^{r}}{C_{sm}}\rho(\alpha)^{1+r}\left(\frac{|x-\alpha|}{\rho(\alpha)}\right)^{r}\ge  \rho(x)^{1+r}.$$
On the other hand, when  
$|x-\alpha|< \rho(x)$, 
$\rho(\alpha) \le 2^{-r}C_{sm} \rho(x)$, so
$\rho$ satisfies slow growth with
$C_{sg} = \max\left(\frac{2^{r}}{C_{sm}},\left(\frac{2^{r}}{C_{sm}}\right)^{1/(1+r)}\right)$.
\end{proof}

Either of these extra assumptions on $\rho$ are sufficient to obtain the error estimate
in \cite{DeRo}:
\begin{theorem}[DeVore Ron I] \label{DeRoI}
Let $\ell>2k-d+1$. Suppose that $\rho$ satisfies Slow Growth with $\epsilon> \frac{2k}{\ell}$. There is a constant $C$ so that for $f\in C^{2k}(\reals^d)$ having
compact support,
$$|f(x) - T_{\Xi} f(x)|\le C \rho(x)^{2k} \|\Delta^k f\|_{\infty}$$
\end{theorem}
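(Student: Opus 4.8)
The plan is to start from the pointwise error bound (\ref{firsterror}) and estimate the integral
$$\int_{\reals^d} |\Delta^k f(\alpha)|\, \rho(\alpha)^{2k-d}\left(1+\frac{|x-\alpha|}{\rho(\alpha)}\right)^{-\nu}\dif\alpha \le \|\Delta^k f\|_\infty \int_{\reals^d} \rho(\alpha)^{2k-d}\left(1+\frac{|x-\alpha|}{\rho(\alpha)}\right)^{-\nu}\dif\alpha,$$
using compact support of $f$ only to justify that the integral converges and that $\Delta^k f$ is bounded. So the crux is a purely geometric estimate: bound $\int_{\reals^d}\rho(\alpha)^{2k-d}\bigl(1+|x-\alpha|/\rho(\alpha)\bigr)^{-\nu}\dif\alpha$ by $C\rho(x)^{2k}$, using the Slow Growth hypothesis with $\epsilon>2k/\ell$ and $\nu=\ell+d-2k$.

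First I would use Slow Growth (\ref{sg}) to control $\rho(\alpha)$ in terms of $\rho(x)$: writing $t=|x-\alpha|/\rho(x)$, we have $\rho(\alpha)\le C_{sg}\rho(x)(1+t)^{1-\epsilon}$. The factor $\rho(\alpha)^{2k-d}$ and the term $(1+|x-\alpha|/\rho(\alpha))^{-\nu}$ pull in opposite directions in $\rho(\alpha)$, so I would split into the region where $\rho(\alpha)$ is comparable to or larger than $|x-\alpha|$ and the region where it is much smaller. In the near region $|x-\alpha|\lesssim\rho(\alpha)$, the decay factor is $\Theta(1)$ and $\rho(\alpha)\le C_{sg}'\rho(x)$ (using $t\lesssim 1$ there, or iterating the bound), so the integrand is at most $C\rho(x)^{2k-d}$ over a ball of radius $\lesssim\rho(x)$, contributing $\lesssim\rho(x)^{2k}$. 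In the far region, I would bound $(1+|x-\alpha|/\rho(\alpha))^{-\nu}\le (|x-\alpha|/\rho(\alpha))^{-\nu}=\rho(\alpha)^{\nu}|x-\alpha|^{-\nu}$, so the integrand becomes $\rho(\alpha)^{2k-d+\nu}|x-\alpha|^{-\nu}=\rho(\alpha)^{\ell}|x-\alpha|^{-\nu}$ (since $2k-d+\nu=\ell$). Now apply Slow Growth to the $\rho(\alpha)^\ell$ factor: $\rho(\alpha)^\ell\le C_{sg}^\ell\rho(x)^{\ell}(1+t)^{\ell(1-\epsilon)}=C_{sg}^\ell\rho(x)^{\ell}(1+t)^{\ell-\ell\epsilon}$, giving an integrand bounded by $C\rho(x)^\ell (1+t)^{\ell-\ell\epsilon}|x-\alpha|^{-\nu}$. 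Changing to the variable $r=|x-\alpha|$ and integrating in polar coordinates produces $\int_0^\infty (1+r/\rho(x))^{\ell-\ell\epsilon}\, r^{-\nu}\, r^{d-1}\,\dif r$ times $C\rho(x)^\ell$; after rescaling $r=\rho(x)s$ this is $C\rho(x)^{\ell+d-\nu}\int_0^\infty (1+s)^{\ell-\ell\epsilon}s^{d-1-\nu}\,\dif s = C\rho(x)^{2k}\int_0^\infty(1+s)^{\ell-\ell\epsilon}s^{d-1-\nu}\,\dif s$ (using $\ell+d-\nu=2k$). The remaining one-dimensional integral converges at infinity precisely because the exponent $(\ell-\ell\epsilon)-(\nu-d+1)=(\ell-\ell\epsilon)-(\ell+1)=-\ell\epsilon-1<-1$, i.e. $\ell\epsilon>0$ — in fact we need the stronger condition from $\ell>2k-d+1$ and $\epsilon>2k/\ell$ to also ensure convergence at the origin (where $s^{d-1-\nu}$ has exponent $d-1-\nu=2k-\ell-1>-d$, which is fine in $\reals^d$, but the near-region argument handles small $s$ anyway). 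The hypotheses $\ell>2k-d+1$ and $\epsilon>2k/\ell$ are exactly what make all the exponents line up.

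The main obstacle I expect is the bookkeeping in the far-field region: one must be careful that the self-improvement of the Slow Growth bound is not needed in a delicate way, and that the split between near and far regions is done so the decay term is genuinely harmless in the near region. A subtle point is that in the near region $|x-\alpha|\le\rho(\alpha)$ does \emph{not} immediately give $|x-\alpha|\lesssim\rho(x)$ — one needs Slow Growth to close the loop, substituting $\rho(\alpha)\le C_{sg}\rho(x)(1+|x-\alpha|/\rho(x))^{1-\epsilon}$ into $|x-\alpha|\le\rho(\alpha)$ and solving, which works because the exponent $1-\epsilon<1$. Once the geometry is set up correctly, the integral estimates are routine. I would also remark that the compact support of $f$ is used only to guarantee $f\in L^\infty$ with $\Delta^k f\in L^\infty$ and that $T_\Xi f$ is well-defined; it plays no role in the size of the bound.
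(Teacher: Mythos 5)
Your plan starts from the same estimate (\ref{firsterror}) as the paper and applies Slow Growth, so the underlying strategy is the same, but the organization differs. The paper applies the Slow Growth bound $\rho(\alpha)\le C_{sg}\rho(x)(1+t)^{1-\epsilon}$ (with $t=|x-\alpha|/\rho(x)$) simultaneously to the factor $\rho(\alpha)^{2k-d}$ and to the denominator inside the decay term, keeping the ``$1+$'' intact so that the resulting one-dimensional integral $\int_0^\infty(1+R)^{2k-d-\ell+\ell\gamma}R^{d-1}\,\dif R$ has no singularity at $R=0$ and converges purely from the exponent condition $\epsilon>2k/\ell$. You instead drop the ``$1+$'' in the far region, bounding $(1+|x-\alpha|/\rho(\alpha))^{-\nu}\le(|x-\alpha|/\rho(\alpha))^{-\nu}$. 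This gives the pleasant algebraic collapse $\rho(\alpha)^{2k-d+\nu}=\rho(\alpha)^{\ell}$, but it also introduces the factor $s^{d-1-\nu}$, which is nonintegrable at $s=0$ since $\epsilon\le1$ and $\ell\epsilon>2k$ force $\ell>2k$ and hence $\nu>d$. So the near/far split must do real work: the far-region integral only converges if $s$ is bounded away from $0$ there.

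That is where there is a gap. You define the far region by $|x-\alpha|>\rho(\alpha)$ and correctly show, via Slow Growth, that the near region $\{|x-\alpha|\le\rho(\alpha)\}$ is contained in a ball $\{|x-\alpha|\lesssim\rho(x)\}$. But the complement of the near region is not the complement of that ball: a point $\alpha$ with $|x-\alpha|$ tiny could lie in the far region if $\rho(\alpha)$ is even tinier, and such points sit exactly where your one-variable integrand is singular, so the claim ``the near-region argument handles small $s$ anyway'' does not follow. To close this you need the converse inequality, self-majorization (\ref{sm}): in the far region $|x-\alpha|>\rho(\alpha)\ge C_{sm}\rho(x)(1+t)^{-r}$ gives $t(1+t)^r>C_{sm}$, hence $t\gtrsim1$, so the far region lies outside a ball of radius $\sim\rho(x)$ and the integral converges. (Equivalently, one can split by $|x-\alpha|\lessgtr\rho(x)$ from the outset, which avoids the whole issue.) With that fix your argument is correct and yields the same exponent bookkeeping; the paper's single substitution is simply shorter because it never discards the ``$1+$'' and so never creates the origin singularity.
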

\begin{proof}
This follows by applying the growth assumption to (\ref{firsterror}) and writing $\gamma = 1-\epsilon$ to obtain:
\begin{eqnarray*}
|f(x) - T_{\Xi} f(x)| &\le& C \int_{\reals^d} |\Delta^{k}f(\alpha)| \rho(x)^{2k -d} \left(1+\frac{|x-\alpha|}{\rho(x)}\right)^{\gamma(2k-d)} \left(1+\frac{\frac{|x-\alpha|}{\rho(x)}}{\left(1+\frac{|x-\alpha|}{\rho(x)}\right)^{\gamma}}\right)^{-\nu} \, \dif \alpha\\
&\le& C \rho(x)^{2k -d}  \|\Delta^{k}f(\alpha)\|_{\infty} \int_{\reals^d}  \left(1+\frac{|x-\alpha|}{\rho(x)}\right)^{2k-d-\ell+\ell\gamma} \, \dif \alpha\\
&\le& C \rho(x)^{2k} \|\Delta^{k}f(\alpha)\|_{\infty} \int_0^{\infty} (1+R)^{ 2k-d-\ell+\ell \gamma}R^{d-1} \dif R.
\end{eqnarray*}
The second inequality follows by writing $(2k-d)\gamma - \nu(1-\gamma) =(2k-d)\gamma- (\ell- 2k +d)(1-\gamma)$.
The convergence of the last integral is a consequence of the assumption  $\gamma<1 - \frac{2k}{\ell}$.
\end{proof}
A further result from \cite{DeRo}, is that functions of lower smoothness can also be treated with local error estimates. The operator $T_{\Xi}$ is instrumental in obtaining low smoothness results, albeit indirectly. 
This is the point of  \cite[Theorem 5.3]{DeRo}, which, for completeness, we rephrase in a simplified form as Theorem \ref{lower} in terms of the slow growth assumption.
The lower order result is technically more complicated than that of Theorem  \ref{DeRoI}. 
It is a common technique to use interpolation theory to obtain direct approximation results 
for functions of lower smoothness.  
DeVore and Ron use an argument of this type, that splits $f$ into 
a rough but benign part, $b$, and a smooth part, $g$. 
This is done in a way that is not entirely straightforward, by controlling the size of 
$b$ (and the smoothness of $g$)  in a precise way to match the local density. 

To discuss lower smoothness estimates, we first introduce fractional smoothness spaces. These can be expressed in numerous different ways: as Besov spaces, Triebel-Lizorkin spaces or (more familiarly) H{\" o}lder-Zygmund spaces. In the setting we consider, these are the same spaces. That is, we consider $F_{\infty,\infty}^{\sigma}(\reals^d) = B_{\infty,\infty}^{\sigma}(\reals^d)=C^{({\sigma})}(\reals^d)$. However, the exact smoothness norm we employ is the $B_{\infty,\infty}^{\sigma}$ norm, defined in terms of wavelet coefficients.  Smooth functions can be expanded as $f= \sum_{j=0}^{\infty}c_j \psi_j$
and the smoothness seminorm is expressed in terms of coefficients $c_j$.

A totally standard construction, used also in \cite{DeRo}, indexes wavelets by gendered, dyadic cubes: $\nu \in \mathcal{D}$, where each $\nu = (e_{\nu},I_{\nu})$ is a pair comprising: 
\begin{itemize}
\item a {\em gender} $e= e_{\nu} \in \{0,1\}^{d}\setminus \{\mathbf{0}\}$
\item and a {\em dyadic cube} $I :=I_{\nu} = 2^{-j}(k+[0,1]^d)$.
\end{itemize}
For a general dyadic cube of this form we denote
 the corner by $c(I):=2^{-j}k$ 
 and the side-length by $\ell(I):=2^{-j}$. 
 These definitions extend for gendered cubes: 
 $c(\nu):=c(I_{\nu})$ and $\ell(\nu) := \ell(I_{\nu})$.
 Under this indexing, each gendered cube $\nu$ has exactly one parent $\nu'$, 
where $I_{\nu} \subset I_{\nu'}$, $e_{\nu}=e_{\nu'}$ and $\ell (\nu') = 2 \ell(\nu)$.

The wavelet system we employ is a family of $C^{r}$, compactly supported functions, with
$r>2k$.
Each wavelet is related to one of $2^d -1$ prototypes by affine changes of variable: 
$\psi_{\nu}(x) = \Psi_{e_{\nu}}\left(\frac{x-c(\nu)}{\ell(\nu)}\right)$. 
In other words, each wavelet is a translated, rescaled copy of one of the $2^d-1$ functions $\Psi_{e}\in C^{r}(\reals^d)$. 
Consequently the supports of wavelets are obtained by affine transformations, and each is contained in a ball with radius proportional to the side-length and  centered at the corner of the cube $I(\nu)$. I.e., there is $\Gamma>0$ so that for all $\nu\in \mathcal{D}$
$$\overline{I_{\nu}}:= \supp{\psi_{\nu}} = c(\nu)+\ell(\nu)\times \supp{\Psi_{e_{\nu}}}\subset
B\bigl(c(\nu),\Gamma \ell(\nu)\bigr).$$
For orthogonal wavelet systems, compactly supported continuous functions have the unique expansion $f = \sum_{\nu\in \mathcal{D}} f_{\nu} \psi_{\nu}$. 
The smoothness seminorm of $f$ is
 $$|f|_{B_{\infty,\infty}^{\sigma}}:= 
\sup_{\nu\in\mathcal{D}} \left(\ell(\nu)^{-{\sigma}}\, |f_{\nu}|\right).$$
%
%
\begin{theorem}[DeVore Ron II]\label{lower} Let $\ell>2k-d+1$. Suppose that $\rho$ satisfies Slow Growth with $\epsilon > \frac{2k}{\ell}$. There is $C>0$ so that for all 
compactly supported $f\in B_{\infty,\infty}^{\sigma}$, $\sigma<2k$, 
there is $s_{f,\Xi} \in \spam(\phi,\Xi)$ so that
$$|f(x) - s_{f,\Xi}(x)| \le C \rho(x)^{\sigma} \| f\|_{B_{\infty,\infty}^{\sigma}}.$$
\end{theorem}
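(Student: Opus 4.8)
The plan is to obtain the low-smoothness estimate by the standard interpolation-space device alluded to in the text: decompose the target $f\in B_{\infty,\infty}^{\sigma}$ into a smooth part $g$ and a small rough part $b$, where the splitting is adapted pointwise to the local density $\rho$. Concretely, using the wavelet expansion $f=\sum_{\nu\in\mathcal{D}}f_{\nu}\psi_{\nu}$, I would fix a threshold level depending on $x$ through $\rho(x)$ -- roughly, put into $g$ all wavelets $\psi_{\nu}$ whose sidelength $\ell(\nu)$ is at least a fixed multiple of $\rho(x)$ and all coarser levels, and put the finer wavelets into $b$. Then $g$ is a $C^{2k}$ function to which Theorem~\ref{DeRoI} applies, and $b$ is a remainder whose sup norm near $x$ is controlled by $\sum_{j:\,2^{-j}\lesssim\rho(x)}2^{-j\sigma}\|f\|_{B_{\infty,\infty}^{\sigma}}\lesssim \rho(x)^{\sigma}\|f\|_{B_{\infty,\infty}^{\sigma}}$, since the series in $2^{-j\sigma}$ is geometric and $\sigma>0$. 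The spline is then $s_{f,\Xi}:=T_{\Xi}g$, and $f-s_{f,\Xi}=(g-T_{\Xi}g)+b$.

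The first term is handled directly by Theorem~\ref{DeRoI}: $|g(x)-T_{\Xi}g(x)|\le C\rho(x)^{2k}\|\Delta^k g\|_{\infty}$. The point is to bound $\|\Delta^k g\|_{\infty}$ (or rather its local version, controlling the integrand in (\ref{firsterror}) away from $x$ as well). Applying $\Delta^k$ to a single rescaled wavelet $\psi_{\nu}$ produces a factor $\ell(\nu)^{-2k}$, so $\|\Delta^k g\|_{\infty}$ picks up $\sum_{j:\,2^{-j}\gtrsim\rho(x)}2^{j(2k-\sigma)}\|f\|_{B_{\infty,\infty}^{\sigma}}$; because $\sigma<2k$ this geometric series is dominated by its top (coarsest-in-$b$, i.e.\ smallest $2^{-j}$ surviving in $g$) term, namely $\rho(x)^{\sigma-2k}\|f\|_{B_{\infty,\infty}^{\sigma}}$. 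Multiplying by the $\rho(x)^{2k}$ from Theorem~\ref{DeRoI} gives exactly $\rho(x)^{\sigma}\|f\|_{B_{\infty,\infty}^{\sigma}}$, as desired. The second term, $b$, contributes $|b(x)|\lesssim\rho(x)^{\sigma}\|f\|_{B_{\infty,\infty}^{\sigma}}$ from the estimate above, using that wavelets vanishing in a neighborhood of $x$ do not affect $b(x)$ (finite overlap, $\supp\psi_{\nu}\subset B(c(\nu),\Gamma\ell(\nu))$).

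The subtlety -- and the step I expect to be the main obstacle -- is that the threshold level must vary with $x$, yet $T_{\Xi}g$ must be \emph{one} fixed function in $\spam(\phi,\Xi)$; I cannot choose a different $g$ for each evaluation point. The remedy is to build the splitting not against $\rho(x)$ but against $\rho(\alpha)$ at the wavelet's location: assign $\psi_{\nu}$ to $b$ or $g$ according to whether $\ell(\nu)$ is small or large compared to $\inf_{\alpha\in B(c(\nu),\Gamma\ell(\nu))}\rho(\alpha)$ (or a comparable quantity), so the decomposition $f=g+b$ is global and independent of $x$. One then needs the slow-growth hypothesis (\ref{sg}) to compare $\rho(\alpha)$ at a wavelet's support with $\rho(x)$ at the faraway evaluation point $x$: when estimating the contribution of $g$ through (\ref{firsterror}), the far-field decay $\left(1+|x-\alpha|/\rho(x)\right)^{-\nu}$ must absorb the growth of $\rho(\alpha)^{2k-d}$ and of the wavelet counts/derivative factors, which is precisely the calculation already carried out in the proof of Theorem~\ref{DeRoI} once the condition $\epsilon>2k/\sigma$ (implied by $\epsilon>2k/\ell$ together with $\sigma<2k<\ell$, or established directly) is in force. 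Checking that these geometric sums in the dyadic scale converge uniformly in $x$, with the slow-growth exponent giving enough room, is the technical heart of the argument; the rest is bookkeeping with the wavelet norm and the finite-overlap property of the $\supp\psi_{\nu}$.
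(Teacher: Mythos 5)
Your proposal follows essentially the same route as the paper: a global wavelet split $f=g+b$ with the threshold for each $\psi_{\nu}$ set by comparing $\ell(\nu)$ to $\rho$ on $\overline{I}_{\nu}$ (the paper uses $\max_{y\in\overline{I}_{\nu}}\rho(y)$ where you propose an infimum, but the global compatibility condition makes these interchangeable up to constants, shifting the slow-growth step from the $g$-estimate to the $b$-estimate or vice versa), followed by applying Theorem~\ref{DeRoI} to $g$ via the pointwise bound $|\Delta^k g(\alpha)|\lesssim\rho(\alpha)^{\sigma-2k}|f|_{B_{\infty,\infty}^{\sigma}}$ fed back through (\ref{firsterror}), and a direct geometric-series bound on $b$. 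You correctly flag the one subtlety the paper glosses over, namely that the smooth part must be bounded locally rather than in $L^{\infty}$, and that one then re-runs the integral estimate of Theorem~\ref{DeRoI}; your setting $s_{f,\Xi}=T_{\Xi}g$ is exactly the paper's.
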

\begin{proof}
We split $f = g+b$ where $|b(x)|\lesssim \rho(x)^{\sigma} |f|_{B_{\infty,\infty}^{\sigma}}$
and $|\Delta^{k}g(x)| \lesssim \rho(x)^{\sigma - 2k} |f|_{B_{\infty,\infty}^{\sigma}}.$  A consequence of this and Theorem \ref{DeRoI} is that 
$|f(x) - T_{\Xi}g(x)| \lesssim \rho(x)^{\sigma} \|f\|_{B_{\infty,\infty}^{\sigma}}$ 
and the theorem follows with $s_{f,\Xi} = T_{\Xi}g$.

To obtain the split, partition $\mathcal{D} = \mathcal{D}_g\cup \mathcal{D}_b$ 
by selecting cubes $\nu$ according to the density over $\overline{I}_{\nu}$:
$$\nu \in \mathcal{D}_g \quad \text{iff} \quad \ell(\nu)\ge \rho(\nu):=
\max_{y\in \overline{I}_{\nu}} \rho(y).$$
%
Define 
$g:= \sum_{\nu \in \mathcal{D}_g}f_{\nu}\psi_{\nu} =  \sum_{\ell(\nu)\ge\rho(\nu)}f_{\nu}\psi_{\nu}.$ Estimating the iterated Laplacian of a wavelet is  straightforward: for $x\in\overline{I}_{\nu}$, 
$|\Delta^k \psi_{\nu}(x)|
\le 
C
\ell(\nu)^{-2k}.$ 
Consequently, 
$$|\Delta^k g(x)| 
\le 
 C\sum_{\substack{\nu \in \mathcal{D}_g\\ x\in \overline{I_{\nu}}}} 
  f_{\nu} \ell(\nu)^{-2k}
\le 
C \, |f|_{B_{\infty,\infty}^{\sigma}}\, 
\sum_{\substack{\nu \in \mathcal{D}_g\\ x\in \overline{I_{\nu}}}}
   \ell(\nu)^{\sigma-2k} 
   \le C'  |f|_{B_{\infty,\infty}^{\sigma}}(\rho(x))^{\sigma-2k}.$$
The final estimate deserves some explanation.  Note that $\nu \in \mathcal{D}_g$ and $x\in \overline{I_{\nu}}$ imply that $\ell(\nu)\ge \rho(x)$. 
Finding $j\in \ints$ so that $2^j\ge\rho(x)>2^{j-1}$,  the number of
wavelets with $\ell(\nu)=2^j$ that have $x$ in their support is bounded, 
$\#\{\nu:\ell(\nu)=2^j, x\in\overline{I}_{\nu}\} \le N$, with a constant independent of $x$ and $j$. 
We may rewrite the sum in the next to last expression in the chain of inequalities as
$$
\sum_{\substack{\nu \in \mathcal{D}_g\\ x\in \overline{I_{\nu}}}} 
   \ell(\nu)^{\sigma-2k}  
\le
\sum_{\substack{\ell(\nu)=2^j\\ x\in\overline{I}_{\nu}}} 
\sum_{i=0}^{\infty}
  \left(2^{j+i}\right)^{\sigma-2k}
\le N 2^{j(\sigma-2k)} \sum_{i=0}^{\infty}\left(2^{i}\right)^{\sigma-2k}
\le C \bigl(\rho(x)\bigr)^{\sigma-2k}.
  $$

Estimating the size of 
$b(x):= \sum_{\nu \in \mathcal{D}_b}f_{\nu}\psi_{\nu} $,
we write 
$|b(x)|\le \sum_{\nu \in \mathcal{D}_b, x\in \overline{I}_{\nu}}|f_{\nu}|$, 
which is bounded by 
$|f|_{B_{\infty,\infty}^{\sigma}} \sum_{\nu \in \mathcal{D}_b,x\in \overline{I}_{\nu}} \bigl(\ell(\nu)\bigr)^{\sigma}$.
If $x$ and $y$ are in $\overline{I}_{\nu}$ and if $\rho(y) > \ell(\nu)$ then  
$$\rho(x)
\ge 
C_{sm}\rho(y) \left(1+\frac{|x-y|}{\rho(y)}\right)^{-r}
\ge 
\ell(\nu)C_{sm}(1+2\Gamma)^{-r}\quad
\Rightarrow\quad
\ell(\nu) \le C \rho(x)
$$ 
As in the case of $g$, it follows that the series
$|b(x)|\le 
|f|_{B_{\infty,\infty}^{\sigma}} 
\sum_{\ell(\nu) \le C\rho(x), x\in\overline{I}_{\nu}}
 \bigl(\ell(\nu)\bigr)^{\sigma}$
can be rewritten as a sum of geometric series, to obtain
$|b(x)|\le C\, |f|_{B_{\infty,\infty}^{\sigma}} \, \bigl(\rho(x)\bigr)^{\sigma}$.
\end{proof}

\section{Placing Centers}
We now turn to a discussion of how condition (\ref{sg}) may be directly implemented to produce effective global approximants with local error estimates.
In this section we present an algorithm for  generating a set of centers with a fixed spacing,
having a more refined spacing on a particular subset. 
This can be done in such a way that the pointwise error from surface spline approximation using $\phi_k$ (the surface spline of order $k>d/2$), reflects the local arrangement of centers. 

We begin with a compact set $\Omega$ (e.g., a finite set
or some lower dimensional manifold) in which we wish to 
place centers with increased density (say a  spacing of $h^s$, $s>1$). 
Furthermore, we wish to have an ``ambient'' density of $h$ outside of $\Omega$. 
Without loss, we assume $h= 2^{-j}$ and
$h^s = 2^{-js}$.

Invoking a $1-\epsilon$ slow growth condition from Definition \ref{SG} (a range of
valid $\epsilon$'s will be determined momentarily), 
with $\rho(\alpha) = 2^{-sj}$ and $\rho(x) = 2^{-j}$, 
we see that extra centers must be placed in a region $\tilde{\Omega}$ with
$\Omega\subset \tilde{\Omega} \subset \{x\mid \mathrm{dist}(x,\Omega) \le 2^{-j(\frac{1-s\epsilon}{1-\epsilon})}\}.$ This imposes a certain condition on the relationship between $s$, the slow growth parameter $\epsilon$, and ultimately the polynomial precision $\ell$ (by the conditions of Theorem \ref{DeRoI}).

To force the set $\tilde{\Omega}\setminus \Omega$ to shrink with $h$, 
$\epsilon\times s$ should be less than $1$; the smaller this product is, the smaller the region of extra centers will be. In turn, Theorem \ref{DeRoI} forces $\ell>2k/\epsilon$.

We place extra, gridded centers with dyadic spacing in annular regions around $\Omega$.
That is, we identify  a sequence of annular regions $\Omega_0, \dots, \Omega_{j_0}$ (with $j_0 = sj-j-1$) around $\Omega$. In each region $\Omega_J$ we place centers $\Xi_J$ having constant spacing, and this spacing diminishes the farther $\Omega_J$ is from 
$\Omega$. Specifically, the $J$th region, $\Omega_J$, has centers with  spacing $2^{-js+J}$. 
The union of these sets $\bigcup_{J=0}^{sj-j-1} \Omega_J$ is $\widetilde{\Omega}$.


{\bf Initial step:} The first such set, $\Omega_0$, contains $\Omega$ and has centers with spacing 
$2^{-sj}$. We make it slightly larger, in order to ensure that sufficiently many centers are present to satisfy Definition \ref{LD}.
Thus we set 
$$\Omega_0 = \{x \mid \mathrm{dist}(x,\Omega)\le \ell\}\quad \text{and} \quad \Xi_0 = 2^{-sj} \ints^d\cap \Omega_0 $$
Inside $\Omega_0$ we have placed a set of  gridded centers
with spacing  
$2^{-sj}$. 

From this, we have
$\rho(\alpha)  = \ell \,2^{-sj}$ for $\alpha \in \Omega$. Indeed, for $\alpha\in\Omega$, there is
a simplex $S_{\alpha}$ containing $\alpha$ and  contained in $\Omega_0$, with side-length 
$\ell 2^{-sj}$ and corners in $\Xi_0$. 
The points $S_{\alpha} \cap \Xi_0$ are in general position for interpolation by $\Pi_{\ell}$, and the associated Lagrange functions
for polynomial interpolation $l_{\xi}(\alpha)$ give the required local polynomial reproduction $a(\xi,\alpha) = l_{\xi}(\alpha)$. The stability constant $K$ is none other than the Lebesgue constant
for this interpolation problem, which is bounded by ${2\ell-1 \choose \ell}$, as demonstrated in \cite[Theorem 2.2]{Bos}. 

{\bf General step:}
As we did before, the width of $\Omega_J$ can be determined from the slow growth condition.
Setting
$$\Omega_J := \left\{x\mid \mathrm{dist}(x,\Omega) \le \ell\, 2^{\left(\frac{J}{1-\epsilon} -js\right)}\right\}\setminus \bigcup_{\nu=0}^{J- 1}\Omega_{\nu}\quad \text {and} \quad
\Xi_J := \Omega_J \cap 2^{J-sj} \ints^d$$
guarantees that
$\rho(\alpha)  = \ell \,2^{J-sj}$ in $\Omega_{J-1},$ since $\alpha \in\Omega_{J-1}$ is at the
center of a ball of radius $\ell 2^{J-sj}$ contained in $\bigcup_{\nu=0}^{J}\Omega_{\nu}$. As before,
there is a simplex $S_\alpha$ containing $\alpha$ contained in this ball, and the corresponding
Lagrange functions give the required coefficient kernel.

{\bf Verifying the slow growth condition:}
It follows that for $x \in \Omega$ and $\alpha \in \Omega_J$ the distance
$\mathrm{dist}(x,\alpha)\ge \ell 2^{\frac{J-1}{1-\epsilon}-js}$, and
$$2^{J-sj} =  2^{-sj+1} \left(\frac{2^{\frac{J-1}{1-\epsilon}-sj}}
{2^{-sj}}\right)^{1-\epsilon}
\qquad
\Longrightarrow
\qquad
\rho(\alpha) \le 2 \rho(x) \left(1+ \frac{\mathrm{dist}(x,\alpha)}{\rho(x)}\right)^{1-\epsilon}
$$
Likewise, for $\alpha \in \Omega_{J'}$ and $x\in \Omega_J$ with $J = J' +m$ and $m\ge 1$, we can bound 
the distance by 
$$
\mathrm{dist}(x,\alpha)\ge \ell 2^{\frac{J'+m}{1-\epsilon} - js} -  \ell 2^{\frac{J'}{1-\epsilon} - js}
\ge 
\ell 2^{-js} 2^{\frac{J'}{1-\epsilon}} \left(2^{\frac{m}{1-\epsilon}}-1\right)\ge 
\tfrac{1}{2}2^{-js} 2^{J'} 2^{\frac{m}{1-\epsilon}}.
$$
This allows us to bound $\frac{\mathrm{dist}(x,\alpha)}{2^{J'+1-js}}$ from below
by
$\tfrac{1}{4}2^{\frac{m}{1-\epsilon}}$; in turn,
$2^m \le 4^{1-\epsilon} \left(\frac{\mathrm{dist}(x,\alpha)}{2^{J'+1-js}}\right)^{1-\epsilon}$.
Thus,
$$\rho(x) = 
2^{J' +m +1-js} 
\le 4^{1-\epsilon} 2^{J'+1-js} \left(1 + \frac{\mathrm{dist}(x,\alpha)}{2^{J'+1-js}}\right)^{1-\epsilon}
 = 4^{1-\epsilon} \rho(\alpha)  \left(1 + \frac{\mathrm{dist}(x,\alpha)}{\rho(\alpha)}\right)^{1-\epsilon}.
$$
It follows that Theorems \ref{DeRoI} and \ref{lower} hold for approximation by $\phi_k$
and for this set of centers with local density $\rho$.

\begin{example}$\mbox{}$
In this example the global spacing is $h= 2^{-j}$, 
while the spacing near the origin will be $h^2=2^{-2j}$.
We choose to impose a slow growth condition with $\epsilon=1/3$. 
By Theorem \ref{DeRoI} we observe that the LD must have precision 
$\ell>2k/\epsilon$, so we choose $\ell = 7k$.
\begin{center}
\begin{figure}[h]
\includegraphics{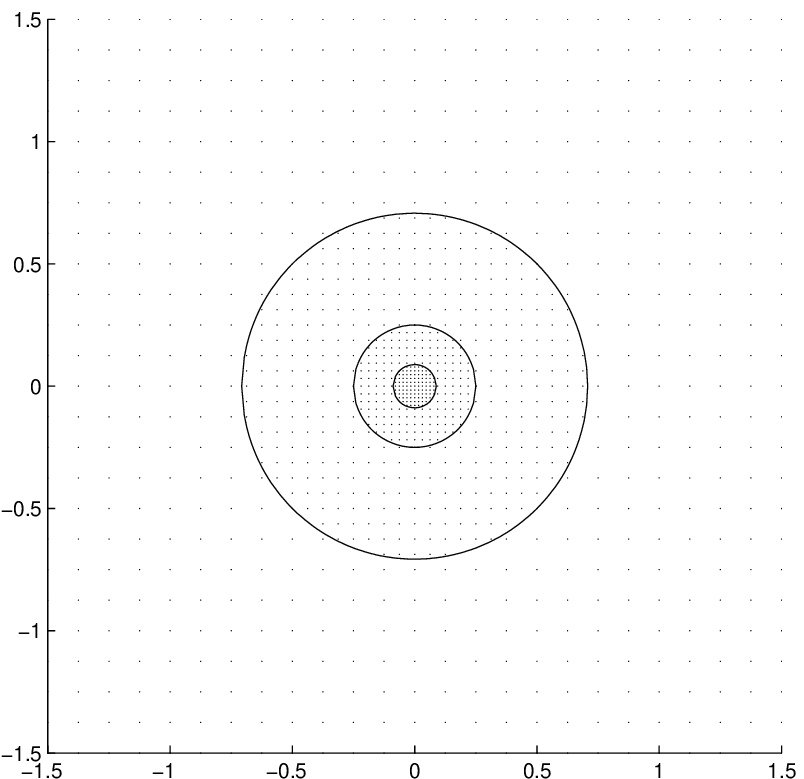}
\caption{A configuration of centers for thin plate spline approximation ($k=2$, as in the first remark) 
where the density ranges from  $\rho(0)\sim h^{2} =2^{-6}$ to
the coarsest density is $\rho(x)\sim h =2^{-3}$. }\label{Fi:bdry}
\end{figure}
\end{center}
The spacing  of centers immediately around the origin should be $2^{-2j}$, but there is an intermediate region where the spacing grows slowly. 
We decompose this in $j$ regions of increasing width: 
$$\Omega_{J} := \{x:  |x|\le 7k \times 2^{\frac{3J}{2}-2j}\}\setminus \bigcup_{0\le\ell<J} \Omega_{\ell},\quad J=1,\dots,\, j$$
in which we place gridded centers with spacing $h_J = 2^{J-2j}$. 
Thus, 
$\rho(x)\le 4^{2/3}\rho(y) (1+\frac{|x|}{\rho(y)})^{2/3}$ holds throughout $\reals^d$,
and by Theorem \ref{lower} there is a constant $C$ so that
for any $f$ with smoothness $0<s\le2k$ we have a good approximant with dramatically increased accuracy at the origin:
$$\|f - s_f\|_{\infty}\le C h^s \|f\|_{s}  \qquad \text{and} \qquad |f(0) - s_f(0)| \le C h^{2s}\|f\|_{s}.$$
In fact, for $x\in \Omega_{J}$, one has $|f(x) - s_f(x)|\le C 2^{s( J-2j)}\|f\|_{s}$. 

We note finally that the cardinality of centers in the ball $B(0,7k\times2^{-j/2})$ is less than
$$\sum_{J=0}^j (7k)^d2^{d(\frac{3J}{2}-2j)}2^{-d(J-2j)} \le(7k)^d \sum_{J=0}^j 2^{d(\frac{J}{2})}
\le C_d (7k)^d 2^{d(\frac{j}{2})}. $$
Since the same ball filled with centers having uniform spacing $h=2^{-j}$ holds roughly
$(7k)^d 2^{d(j/2)}$ centers, the increased precision comes at a cost of adding only a multiple of the original centers.
\end{example}


\section{Local Error Estimates for RBF Interpolation}
Many RBF interpolation results provide (implicitly or explicitly)
local error estimates similar to those of Theorem \ref{DeRoI}. In this section we discuss
two well known techniques for providing error estimates for RBF interpolation which 
happen to give estimates that are local.

In sharp contrast to the results of DeVore and Ron, only Definition \ref{LD} is needed for 
these estimates, and no extra global 
compatibility assumptions such as Definition \ref{SG} are needed for these (in other words, 
the estimates depend only on the nearest neighbors to point where the error is being measured).
The drawback comes from the fact that the estimates hold for only particular classes of target functions (i.e., for functions in the native space), and, although the error being measured is pointwise, the smoothness assumption on the target function is generally not measured in $L_{\infty}$.

This section is organized in two parts: first we discuss the localness inherent in Duchon's seminal 
result \cite{D1} concerning surface spline interpolation, 
and then we discuss the more general (in that it treats more kernels, and also addresses 
error of the derivative)  ``power function'' method.

\subsection{Local Error Estimates for Surface Spline Interpolation}
In this case we consider interpolation of a function $f$, defined initially on a bounded domain $\Omega$ with Lipschitz boundary, at a finite set of points $\Xi\subset \Omega$ using surface splines defined in
(\ref{surface}). That is, for $k>d/2$ we consider the unique interpolant to $f$ at $\Xi$ of the form
$$I_{\Xi}f (x)= \sum_{\xi\in\Xi} A_{\xi} \phi(x-\xi) +p(x),
\qquad
\text{with} \ p\in \Pi_{k-1}
\
\text{and}\ \sum_{\xi\in\Xi}A_{\xi}q(\xi) = 0\  \text{for all} \ q\in\Pi_{k-1}.$$
This is equivalent to finding the interpolant $s_{f,\Xi}$ in the Beppo-Levi space  
(also known as the homogeneous Sobolev space)
$D^{-k}L_2(\reals^d):=\{f\in C(\reals^d)\mid D^{\alpha}f\in L_2(\reals^d) \ \forall |\alpha|=k \}$ to $f$ at 
$\Xi$ that minimizes the Sobolev seminorm on $\reals^d$: 
$|u|_k :=|u|_{W_2^k(\reals^d)}$,
where the Sobolev seminorm defined on a measurable set $U$ is defined as
$$|u|_{W_2^k(U)}^2 :=   \int_{U} \sum_{|\alpha|=k}{k \choose \alpha} \left|D^{\alpha}u(x)\right|^2 \dif x.$$

Duchon's error estimates result from two key observations: 
\begin{enumerate}
\item a ``zeros lemma'' stating that any function defined on a ball $B$,
in the Sobolev class $W_2^k(B)$ (with $k>d/2$), having many zeros in $B$
 is necessarily small in $L_{\infty}(B)$, and  
\item an observation that the interpolation projector is bounded on $D^{-k}L_2$,
indeed, it is orthogonal with respect to the $D^{-k}L_2$ semi-inner product, with the
space of functions vanishing on $\Xi$ 
as its nullspace.
\end{enumerate}

\paragraph{1. Zeros Lemma} It is not  difficult, in this context, to derive a simplified, pointwise version of Duchon's zeros lemma \cite[Proposition 2]{D1}. Namely, for $k>d/2$ and $u\in W_2^k$ vanishing on a set $\Xi$ satisfying the conditions
of Definition \ref{LD} with precision $\ell \ge k-1$, we have
\begin{equation}\label{zeros}
|u(\alpha)|\le C \bigl(\rho(\alpha)\bigr)^{m-d/2} |u|_{W_2^k\bigl(B(\alpha,\rho(\alpha))\bigr)}.\end{equation}
To demonstrate this, we begin by observing that
$|u(\alpha)|\le (1+K)  \|p - u\|_{L_{\infty}\bigl(B(\alpha,\rho(\alpha)) \bigr)}$.
This follows directly from the fact that 
$|u(\alpha)| \le |p(\alpha)| +|u(\alpha) - p(\alpha) |$, for all $p\in \Pi_{\ell}$,  
and from Definition \ref{LD}:
$$|p(\alpha)| \le K \sup_{\xi\in \Xi\cap B(\alpha,\rho(\alpha))} |p(\xi)| 
\le 
K \sup_{\xi\in \Xi\cap B(\alpha,\rho(\alpha))} |p(\xi) - u(\xi)| 
\le 
K \|p - u\|_{L_{\infty}\bigl(B(\alpha,\rho(\alpha)) \bigr)}.$$
Estimate (\ref{zeros}) follows by dilating and translating a well-known result from polynomial approximation:
$\inf_{p\in \Pi_{\ell}}\|p- u\|_{L_{\infty}\bigl(B(0, 1) \bigr)} \le \inf_{p\in \Pi_{k-1}}\|p- u\|_{L_{\infty}\bigl(B(0, 1) \bigr)} \le C  |u|_{W_2^{k}\bigl(B(0,1) \bigr)}$.

\paragraph{2. Orthogonal Projection}By \cite[Lemma 3.2]{D1}, the interpolation operator 
$$I_{\Xi}: D^{-k}L_2(\reals^d) \mapsto
\left\{\sum_{\xi\in\Xi} A_{\Xi} \phi(\cdot - \xi) + p\mid p\in \Pi_{m-1}\ \text{and}\ \sum_{\xi \in \Xi} A_{\xi} q(\xi) = 0 
\text{ for all }q\in \Pi_{k-1}\right\}$$
satisfies the orthogonality relationship
$$|I_{\Xi}f|_{k}^2+|f- I_{\Xi}f|_{k}^2= |f|_{k}^2$$
for all $f\in D^{-k}L_2(\reals^d)$.

\paragraph{Local Error Estimate} At this point, we can apply the argument of \cite[Proposition 3]{D1}. We have
\begin{eqnarray*}
|f(\alpha) - I_{\Xi}f(\alpha)| &\le& 
C \bigl(\rho(\alpha)\bigr)^{k-d/2} |f - I_{\Xi}f |_{W_2^k\bigl(B(\alpha,\rho(\alpha))\bigr)}\\
&\le& C  \bigl(\rho(\alpha)\bigr)^{k-d/2} |f - I_{\Xi}f |_{k}\\
&\le&  C  \bigl(\rho(\alpha)\bigr)^{k-d/2} |f  |_{k}.
\end{eqnarray*}
We note that no extra conditions are necessary beyond Definition \ref{LD}, but this comes
with the drawback that the order is known only for functions in $W_2^k(\reals^d)$, and that this is an
unconventional smoothness space (for this problem), in that it measures smoothness in $L_2$ rather than $L_{\infty}$. In contrast,  Theorems \ref{DeRoI} and \ref{lower} seem to require Definition \ref{SG},
but they give orders in the range $s\in[0,2k]$ for functions from $C^{s}$.

\subsection{Local error estimates via the power function method}
An alternative method for measuring error, due originally to Wu and Schaback, estimates 
the pointwise error in terms of the `power function' associated with a conditionally positive 
definite kernel $\phi$,
%
\begin{eqnarray*}
\mathcal{P}_{\Xi}(x)&:=&
\left(\phi(0) - 2\sum_{\xi\in\Xi}\chi_{\xi}(x)\phi(x-\xi) +
\sum_{\xi,\zeta\in\Xi}  \chi_{\zeta}(x)\chi_{\xi}(x)\phi(\zeta-\xi)\right)^{1/2}.
\end{eqnarray*}
At each point $x$, it measures, roughly, the norm of the representer of the interpolation error at $x$:  $\delta_x (\mathrm{Id}-I_{\Xi})$ in a certain reproducing kernel semi-Hilbert space, the {\em native space}.
Our discussion is follows its development in the article of Wu and Schaback, \cite{WuSch}, where the local RBF error estimates we now discuss were first presented, and Wendland's text \cite{Wend}.

Consider RBF interpolation by a 
radial function that is conditionally positive definite of order $m$:
a radial function $\phi$ which, for any finite $\Xi\subset \reals^d$,
has a collocation matrix 
$\mathcal{C}_{\Xi}:=\bigl(\phi(\xi-\zeta)\bigr)_{\xi,\zeta}$ that is positive definite on vectors $\mathbf{A}\in \reals^{\Xi}$
satisfying 
\begin{equation}\label{vm}
\sum_{\xi\in\Xi} A_{\xi} p(\xi)=0 \qquad \text{for all} \quad p \in \Pi_{m-1}
\end{equation}
(we assume $\Pi_{-1}=\{0\}$ when $m=0$, in this case the kernel is simply positive definite).
For any finite, unisolvent $\Xi\in \reals^d$  (with respect to $\Pi_{m-1}$), 
the interpolation problem
possesses a unique solution 
$I_{\Xi} f$ in $S_m(\Xi):= \{\sum_{\xi\in\Xi} A_{\xi}\phi(\cdot-\xi) + p\mid p\in \Pi_{m-1}, \mathbf{A} \ \text{satisfying (\ref{vm})}\ \}.$

The associated native space
is a semi-Hilbert space $\mathcal{N}$ with a semi-inner product 
$\langle\cdot,\cdot\rangle_{\mathcal{N}}$ determined by $\phi$.
See \cite{Scha} or \cite[Chapter 8]{Wend} 
for a detailed construction.
The semi-inner product has $\Pi_{m-1}$ as its nullspace and there is a 
reproducing kernel, in the following sense:
\begin{equation}\label{Cond_Rep}
f(x) = (\mathrm{Proj_{{m-1}}} f)(x)+ \langle f,G(\cdot, x)\rangle_{\mathcal{N}} \qquad \text{for all}\ f\in \mathcal{N}.
\end{equation}
The operator $\mathrm{Proj_{{m-1}}}$ is a projector onto $\Pi_{m-1}$ with nullspace
determined by a fixed set of points $X$ poised for interpolation by $\Pi_{m-1}$. 
I.e., $X$ is a fixed, unisolvent set of points (with respect to $\Pi_{m-1}$)  with $\#X = \dim \Pi_{m-1}$.
The nullspace of  $\mathrm{Proj_{{m-1}}}$ is simply $
\left(\spam_{x_j\in X} \delta_{x_j}\right)\!\perp$, the joint kernel of the functionals $ \delta_{x_j} $  . 
For each $x$, the function $G(\cdot,x)$ (which  is uniquely determined by the projector
$\mathrm{Proj_{{m-1}}}$ and, in turn, by the fixed set $X$) is in $\mathcal{N}$, and reproduces 
the functional $\delta_{(x)}:=\delta_x - \delta_x \mathrm{Proj_{{m-1}}} $.

By expressing the interpolant in terms of the Lagrange basis, $I_{\Xi} f = \sum_{\xi\in\Xi} f(\xi) \chi_{\xi}$,  the interpolation error can be expressed, with the help of (\ref{Cond_Rep}) as
$$\left|f(x)-I_{\Xi} f(x)\right|  =\left| \langle f, G(\cdot,x)\rangle_{\N} + \mathrm{Proj_{{m-1}}}f(x)
- 
 \sum_{\xi\in\Xi}\bigl(\langle f, G(\cdot,\xi)\rangle_{\N} + \mathrm{Proj_{{m-1}}}f(\xi)\bigr)\chi_{\xi}(x)\right|.
$$
Since $\Pi_{m-1}\subset S_m(\Xi)$, $p\in \Pi_{m-1}$ can be written as 
$p = \sum_{\xi\in\Xi}p(\xi)\chi_{\xi}$, 
and we can simplify the above expression:
$$\left|f(x)-I_{\Xi} f(x)\right|  
 =
 \left| 
   \left\langle f, G(\cdot,x)
    - 
    \sum_{\xi\in\Xi} \chi_{\xi}(x) G(\cdot,\xi)
    \right \rangle_{\N} 
  \right|
  \le
   \left| f\right|_{\N}
   \left| G(\cdot,x)
    - 
    \sum_{\xi\in\Xi} \chi_{\xi}(x) G(\cdot,\xi)
    \right |_{\N}. 
$$

The quadratic form $Q_x(\mathbf{u}):= \phi(0) - 2\mathbf{u^{T}}R_{\Xi}(x) + \mathbf{u}^{T}\mathcal{C}_{\Xi}\mathbf{u}$, 
with $R_{\Xi}(x) =\bigl(\phi(x-\xi)\bigr)_{\xi\in\Xi}$,
is defined for 
$\mathbf{u}\in \reals^{\Xi}$, and one easily sees that 
$\mathcal{P}_{\Xi}(x)^2 = Q_x(\mathbf{u}^*)$
with $\mathbf{u^*} = (\chi_{\xi}(x))_{\xi\in\Xi}$.
 By  \cite[Lemma 11.3]{Wend}, one has for certain admissible vectors $\mathbf{u}$ -- namely for $\mathbf{u}\in \mathbb{M}_x:=\{\mathbf{u}\in\reals^{\Xi}\mid \sum_{\xi\in\Xi}u_{\xi} p(\xi)= p(x)\}$ that the
 quadratic form is related to the  function $G$ by 
$Q_x(\mathbf{u}) =  | G(\cdot,x)- 
    \sum_{\xi\in\Xi} u_{\xi} G(\cdot,\xi)
  |_{\N}^2. $
Therefore, it follows that $|f(x) - I_{\Xi}f(x)|\le \left| f\right|_{\N} \mathcal{P}_{\Xi}(x)$, which is \cite[Theorem 4]{WuSch}, and was alluded to in the first paragraph of this subsection. 
  
 We can say more, however, since the minimum of $Q_x(\mathbf{u})$ over 
  $\mathbb{M}_x$ is $ \mathcal{P}_{\Xi}(x)^2$, \cite[Theorem 1]{WuSch}. 
  It follows that one can estimate the power function at $x$ using $Q_x(\mathbf{u}),$ for any other
  $\mathbf{u}\in \mathbb{M}_x$.
  We choose  $\mathbf{u}$ determined by
  the coefficient kernel
   ${u}_{\xi} = a(\xi,x)$
obtained from Definition \ref{LD} (with precision $\ell \ge m-1)$.  In other words,
$$\bigl( \mathcal{P}_{\Xi}(x)\bigr)^2 \le \phi(0) - 2\sum_{\xi\in\Xi}a(\xi,x)\phi(x-\xi) +
\sum_{\xi,\zeta\in\Xi}  a(\zeta,x) a(\xi, x)\phi(\zeta-\xi)$$
Assume now that the RBF $\phi$ is in $C^s(\reals^d)$, $s\in (0,\infty)$ and that Definition \ref{LD} holds with polynomial precision $\ell$ where $\ell = \max(\lceil s \rceil,m)-1$.
Polynomial reproduction then gives, for any $p\in \Pi_{\ell}$:
\begin{eqnarray*}
\bigl( \mathcal{P}_{\Xi}(x)\bigr)^2 
&\le& \phi(x-x) - p(x-x) -  \sum_{\xi \in\Xi} a(\xi,x) \left(\phi(x-\xi) -
p(x-\xi)\right) \\
&\mbox{}& -\sum_{\xi\in\Xi} a(\xi,x) \left(\left(\phi(x-\xi) - p(x-\xi)\right)- \sum_{\zeta \in\Xi} a(\zeta,x) \left(\phi(\zeta-\xi) - p(\zeta-\xi)\right) \right)\\
&\le& (1+K) \|\phi - p\|_{L_{\infty}\bigl(0,\rho(x)\bigr)}+ (K+K^2) \|\phi - p\|_{L_{\infty}\bigl(0,2\rho(x)\bigr)}
\le C (1+K)^2 \bigl(\rho(x)\bigr)^s.
\end{eqnarray*}
It follows that for $f\in\mathcal{N}$  the interpolation error satisfies the pointwise bound:
$$|f(x) - I_{\Xi}f(x)| \le C (1+K)\bigl(\rho(x)\bigr)^{s/2} |f|_{\mathcal{N}}.$$

In this case, we note that, again, no extra conditions are necessary beyond Definition \ref{LD}, but
this comes with the drawback that the order is known only for functions in $\mathcal{N}$,
which typically  measures smoothness in $L_2$ rather than $L_{\infty}$.  In this case, however,
the power function approach has an extra advantage, which we have not discussed (treated in \cite{WuSch} and \cite[Chapter 11]{Wend}) in that it gives estimates for derivatives of the error, as well.

\bibliographystyle{siam}
\bibliography{LocalRBFSubmit}
\end{document}